\newtheorem{theorem}{Theorem}[section]
\newtheorem{lemma}[theorem]{Lemma}
\begin{document}

\title[Explicit generators for relation module]{Explicit generators for the relation module in the example of Gruenberg--Linnell}

\author[W. H. Mannan]{\tt W. H. Mannan\\
School of Computing, Engineering and Mathematics, 
University of Brighton, Cockroft Building, Lewes Road, Brighton BN\textup{2 4}GJ\\email: wajid@mannan.info}

\begin{abstract}
 Gruenberg and Linnell showed that the standard relation module of a free product of $n$ groups of the form $C_r \times \mathbb{Z}$ could be generated by just $n+1$ generators, raising the possibility of a relation gap.  We explicitly give such a set of generators.

\bigskip \noindent
{\bf Keywords}: {Relation module, relation gap, D2 problem, Gruenberg--Linnell}
 
 \bigskip\noindent
 {\bf MSC}{20J05, 20C12, 57M05, 57M20}

\end{abstract}

\maketitle

\section{Introduction}
The relation gap problem for a finite set of generators of a finitely presented group asks if the number of relations needed to present the group, exceeds the number needed to generate the relation module (in which case the group is said to have a relation gap)\cite{Harl}.  This problem is closely related to major questions in low dimensional topology, most directly Wall's D(2) problem \cite{John1}, as a group with relation gap would likely lead to a cohomologically 2 dimensional finite cell complex, not homotopy equivalent to a finite 2-complex \cite{Mann,Mann1}.

 A number of potential groups with relation gap have been posited, based on the idea of taking free products of groups with some parameter mutually coprime.  The idea here is that the Chinese remainder theorem allows one to pick a set of generators for the relation module far smaller than any known set of relators which present the group.  Gruenberg and Linnell provided an example \cite{Grue} taking free products of perhaps the simplest such groups: $C_r \times \mathbb{Z}$.   
 
Whilst it is known that $C_r \times \mathbb{Z}$ satisfies the D(2) property  \cite{Time}, it follows from \cite{Grue} that the free product of such groups may not (and thus solve Wall's D(2) problem).
 
 However unlike other such examples (e.g. \cite{Brid})  the generators for the relation module are not given in any obvious way in \cite{Grue} and the arguments used to prove their existence are highly technical.   This has hindered the study of these examples in various ways.  For example, without explicit generators one cannot explicitly construct potential counterexamples to Wall's D(2) problem (as done in \cite{Brid}).  On the other hand one cannot seek to modify the generators of the relation module to obtain a presentation of the group, as has been done in another case \cite[Satz 2.35]{Hoga}.
 
 We rectify this by providing explicit generators for the relation module, with the simplest form one could hope for.  We thus dispel the notion that the Gruenberg--Linnell example is more obscure than later examples.  
 
 As an application, in \S\ref{D2} we use our generators to give an explicit description of a candidate for the solution of Wall's D2 problem. 
 
 We note that the author of \cite{Harl} pointed out that the late Karl Gruenberg mentioned similar constructions to ours in private correspondence (now lost).

\section{The relation module}
\noindent Given pairwise coprime integers $r_1,\cdots, r_n\geq 2$, let
 \begin{align}\qquad G&=&&\substack{n\\ * \\{i=1}} (C_{r_i} \times \mathbb{Z})\nonumber \\&=&& \langle a_i,\,b_i,\, i=1,\cdots,n\vert\,\, R_i,\,S_i,\,i=1,\cdots,n\rangle,\label{pres}\end{align}
where the $R_i,S_i$ are the following elements of the free group $F$ on the $a_i,b_i$:\[R_i=[a_i,b_i]=a_ib_i{a_i}^{-1}{b_i}^{-1},\qquad\qquad S_i={a_i}^{r_i}.\]
For $i=1,\cdots,n$ let $\Sigma_i, \Gamma_i$ denote the following elements of the group ring $\mathbb{Z}[G]$:
\[
\Sigma_i=\substack{{r_i-1}\\ \sum\\{j=0}} {a_i}^j,\qquad\qquad \Gamma_i=\substack{{r_i-1}\\ \sum\\{j=0}} j{a_i}^j.
\]
Thus for each $i$ we have:\begin{align}
(1-a_i)\Sigma_i=0,\qquad {\Sigma_i}^2=\Sigma_i r_i,\qquad (1-a_i)\Gamma_i=\Sigma_i-r_i.\label{gprngid}\end{align}

The relation module $M$ associated to (\ref{pres}) is the abelianization of the normal closure of the $R_i,S_i\in F$.  Regarding this as an additive group, we have a well defined (right) action of $G$ on $M$ given by conjugation in $F$.  Let the $D_i,E_i \in M$ denote the images of the $R_i,S_i\in F$ respectively.  Thus the $D_i,E_i$ generate $M$ as a right $\mathbb{Z}[G]$ module.

For each $i$ we have the following identities in $F$:
\begin{align*}({a_i}^{-1}R_i{a_i})({a_i}^{-2}R_i{a_i}^2)\cdots({a_i}^{-r_i}R_i{a_i}^{r_i})&&
{a_i}^{-1}{S_i}^{-1}a_iS_i=e.
\\=({a_i}^{-1}R_i)^{r_i}{a_i}^{r_i}=
b_i{a_i}^{-r_i}{b_i}^{-1}{a_i}^{r_i}\\=
b_i{S_i}^{-1}{b_i}^{-1}S_i,\end{align*}

Thus in $M$ we have:
\begin{align}D_i\Sigma_i=E_i(1-{b_i}^{-1}),\qquad\qquad\qquad E_i(1-a_i)=0.\label{relid}\end{align}

\begin{lemma}
For each $i$ we have \[(E_i\,\,+\,\,D_i(1-a_i))\quad ((1-{b_i}^{-1})\Sigma_i\,\,+\,\,(\Sigma_i-r_i)\Gamma_i)\qquad=\qquad D_i {r_i}^2.\] \label{calc}
\end{lemma}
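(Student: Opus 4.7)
The plan is to expand the product on the left-hand side as a sum of four terms, then simplify each one using the identities already established in (\ref{gprngid}) and (\ref{relid}), together with the fact that $a_i$ and $b_i$ commute in $G$ (hence in $\mathbb{Z}[G]$) because $C_{r_i}\times\mathbb{Z}$ is abelian. Keeping in mind that $M$ is a \emph{right} $\mathbb{Z}[G]$-module, so all scalars act on the right, the distributed expression is
\[
E_i(1-b_i^{-1})\Sigma_i \;+\; E_i(\Sigma_i-r_i)\Gamma_i \;+\; D_i(1-a_i)(1-b_i^{-1})\Sigma_i \;+\; D_i(1-a_i)(\Sigma_i-r_i)\Gamma_i.
\]

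For the first term, I would apply $D_i\Sigma_i = E_i(1-b_i^{-1})$ from (\ref{relid}) in reverse to rewrite it as $D_i\Sigma_i^2$, and then use $\Sigma_i^2=\Sigma_i r_i$ to get $r_i D_i\Sigma_i$. For the second term, $E_i(1-a_i)=0$ from (\ref{relid}) implies $E_i a_i^j = E_i$ for all $j$, so $E_i\Sigma_i = r_i E_i$, whence $E_i(\Sigma_i-r_i)=0$ and the whole term vanishes. For the third term, since $a_i$ commutes with $b_i^{-1}$ in $\mathbb{Z}[G]$, I can swap $(1-a_i)$ past $(1-b_i^{-1})$ to obtain $D_i(1-b_i^{-1})(1-a_i)\Sigma_i$, which kills by $(1-a_i)\Sigma_i=0$.

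The fourth term is the only one requiring a small manipulation: using $(1-a_i)\Sigma_i=0$ I can rewrite $(1-a_i)(\Sigma_i-r_i) = -r_i(1-a_i)$, so the term becomes $-r_i D_i(1-a_i)\Gamma_i$, and one final application of $(1-a_i)\Gamma_i = \Sigma_i - r_i$ turns it into $-r_i D_i\Sigma_i + r_i^2 D_i$. Adding the four simplified contributions, the $\pm r_i D_i\Sigma_i$ terms cancel and what remains is $D_i r_i^2$, as required.

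There is no real obstacle: every step is a direct application of an identity already recorded in the excerpt, and the only subtlety is remembering to multiply on the right and to invoke the (implicit but fundamental) commutativity of $a_i$ and $b_i$ when reordering $(1-a_i)$ and $(1-b_i^{-1})$. The ordering of the simplifications above is chosen so that each reduction uses exactly one identity at a time, making the bookkeeping transparent.
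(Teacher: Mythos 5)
Your proof is correct and follows essentially the same route as the paper: expand the product into four terms, show the second and third vanish, and let the surviving $\pm D_i\Sigma_i r_i$ contributions cancel to leave $D_i r_i^2$. The only (immaterial) differences are that you kill the second term via $E_i\Sigma_i=r_iE_i$ rather than substituting $\Sigma_i-r_i=(1-a_i)\Gamma_i$, and you simplify the fourth term by first collapsing $(1-a_i)(\Sigma_i-r_i)$ to $-r_i(1-a_i)$ instead of expanding $(\Sigma_i-r_i)^2$.
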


\begin{proof}
Multiplying out the LHS using (\ref{gprngid}), (\ref{relid}), we get the following four terms:
\begin{align*}
E_i(1-{b_i}^{-1})\Sigma_i&=&&D_i{\Sigma_i}^2&=&D_i\Sigma_ir_i,\\
E_i(\Sigma_i-r_i)\Gamma_i&=&&E_i(1-a_i){\Gamma_i}^2&=&0,\\
D_i(1-a_i)(1-{b_i}^{-1})\Sigma_i&=&&D_i(1-{b_i}^{-1})(1-a_i)\Sigma_i&=&0,\\
D_i(1-a_i)(\Sigma_i-r_i)\Gamma_i&=&&D_i(\Sigma_i-r_i)^2&=&D_i{r_i}^2-D_i\Sigma_ir_i.
\end{align*}
Adding the terms gives $D_i {r_i}^2$, as required. 
\end{proof}

\begin{theorem}\label{main}
Let \[X_i=E_i\,+\,D_i(1-a_i),\quad\,\,i=1,\cdots,n \qquad{\rm and}\qquad X_{n+1}=\substack{n\\ \sum\\{i=1}} D_i.\]  Then the $X_i$, $i=1,\cdots,n+1$ generate the whole of $M$.
\end{theorem}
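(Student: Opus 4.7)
Let $N$ denote the $\mathbb{Z}[G]$-submodule of $M$ generated by $X_1,\ldots,X_{n+1}$. Since $M$ is already known to be generated by $D_1,\ldots,D_n$ and $E_1,\ldots,E_n$, and since the very definition $X_i = E_i + D_i(1-a_i)$ rearranges as $E_i = X_i - D_i(1-a_i)$, once I know $D_i\in N$ for every $i$ I also have $E_i\in N$ for every $i$ and the theorem follows. The plan is therefore to prove $D_i \in N$ for each $i$, and then invoke this rearrangement to collect the $E_i$ at the end.

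The entry point is Lemma \ref{calc}, which tells me that $X_i$ acts on the explicit group-ring element $(1-b_i^{-1})\Sigma_i+(\Sigma_i-r_i)\Gamma_i$ to produce $D_i r_i^2$; hence $D_i r_i^2 \in N$ for every $i$. The remaining content is the upgrade from $D_i r_i^2 \in N$ to $D_i \in N$, and here the generator $X_{n+1} = \sum_i D_i$ enters together with the pairwise coprimality of the $r_i$. Writing $m_i = \prod_{j \neq i} r_j^2$, I will observe that each $D_j m_i$ with $j \neq i$ is an integer multiple of $D_j r_j^2$ and so lies in $N$; subtracting these terms from $X_{n+1} m_i \in N$ leaves $D_i m_i \in N$. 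Since $\gcd(r_i^2, m_i) = 1$, a B\'ezout combination of $D_i r_i^2$ and $D_i m_i$ then retrieves $D_i$ itself.

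The step I expect to be the main (indeed only) obstacle is recognising that Lemma \ref{calc} and the auxiliary generator $X_{n+1}$ must be used in tandem: the lemma alone pins down each $D_i$ only up to the integer factor $r_i^2$, while $X_{n+1}$ alone only yields the combined sum $\sum_i D_i$. It is precisely the pairwise coprimality of the $r_i$ that allows these two pieces of partial information to be combined, via a Chinese remainder manoeuvre applied one index at a time, into the individual $D_i$. Once this is in place, the chain $D_i \in N \Rightarrow E_i \in N \Rightarrow N = M$ is immediate, and the theorem is proved.
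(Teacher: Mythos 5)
Your proposal is correct and follows the paper's own argument: Lemma \ref{calc} puts each $D_i r_i^2$ in the span of the $X_i$, coprimality of the $r_i$ together with $X_{n+1}=\sum_i D_i$ recovers each $D_i$, and the rearrangement $E_i = X_i - D_i(1-a_i)$ finishes. Your B\'ezout computation with $m_i=\prod_{j\neq i} r_j^2$ is just a more explicit rendering of the paper's appeal to the Chinese remainder theorem.
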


\begin{proof}
From Lemma \ref{calc} we know that each $D_i{r_i}^2$ lies in the span of the $X_i$.  As the $r_i$ are pairwise coprime, the Chinese remainder theorem tells us that the $D_i$ are in the span of the $D_i {r_i}^2$ and $X_{n+1}$.  Finally note that each $E_i=X_i-D_i(1-a_i)$.  Thus all the $D_i, E_i$ lie in the span of the $X_i$ as required.
\end{proof}

\section{A cohomologically 2 dimensional 3--complex} \label{D2}

 The most direct consequence of Theorem \ref{main} is that those seeking to find relators in the $F$ to realize the $X_i$ (and thus disprove that this  example resolves the relation gap problem), will now know what they are seeking to realize, rather than just its existence.  
  
One approach to finding such relators is to apply elementary transformations to (\ref{pres}) to obtain a presentation containing several trivial relators (empty products of the $a_i,b_i$).  Such trivial relators yield basis elements for a  free factor in the module $C_2(Y)$ defined below.  Thus another application of Theorem \ref{main} is Lemma \ref{factor} in this section, which explicitly gives  a basis for such a factor.

 However our main purpose in deriving Lemma \ref{factor}  is for another application: explicitly presenting a potential solution to Wall's D2 problem.  This problem asks if every finite cohomologically 2 dimensional 3--complex is homotopy equivalent to a finite 2--complex (all spaces considered are finite CW complexes).  In every other dimension it is known that cohomological dimension reflects the minimal dimension in the homotopy type.

From our explicit generating set of the relation module one can construct a cohomologically 2 dimensional 3 complex that has the potential of being a counterexample to the D(2) conjecture. One  construction is given in the proof of Dyer's theorem (see \cite{Harl}) and was used in Bridson/Tweedale \cite{Brid}. We follow a more direct route.

Starting with $Y$, the Cayley complex of (\ref{pres}), we will attach a finite number of 3--cells to obtain a finite cohomologically 2 dimensional 3--complex $Y'$, which is not known to be homotopy equivalent to any finite 2--complex.  Indeed there is no known finite 2--complex with the same fundamental group and Euler characteristic as $Y'$.

The cellular chain complex of $\tilde{Y}$ (the universal cover of $Y$) is an algebraic complex of modules and linear maps over  $\mathbb{Z}[G]$:

\vspace{.4mm}

\xymatrix{C_*(\tilde{Y})\colon=&0\ar@{.>}[r]&\pi_2(Y)\ar@{.>}[r]&C_2(\tilde{Y})\ar[r]^{d_2}&C_1(\tilde{Y})\ar[r]^{d_1}&C_0(\tilde{Y})\ar@{.>}[r]&\mathbb{Z}\ar@{.>}[r] &0}

\vspace{1.6mm}

\noindent This sequence is exact, as the kernel of $d_2$ is identified with $\pi_2(\tilde{Y})=\pi_2(Y)$ via the Hurewicz isomorphism.  The module $C_2(\tilde{Y})$ is freely generated by elements $\hat{D_i},\hat{E_i}$ corresponding to the relators $R_i,S_i$ respectively.

We may factorize $d_2$ through $M$:

\bigskip
\xymatrix{&&&&&C_2(\tilde{Y})\ar[dr]^f\ar[rr]^{d_2}&&C_1(\tilde{Y})\\
&&&&&&M\ar[ur]_\iota}

\medskip
\noindent where $\iota\colon M \to C_1(\tilde{Y})$ is an inclusion and $f\colon C_2(\tilde{Y}) \to M$ is given by:\[f\colon\,\, \hat{D_i}\mapsto D_i,\quad  \hat{E_i} \mapsto E_i,\] for $i=1,\cdots,n$.  Thus to attach 3-cells to $Y$ one must specify how their boundaries are attached to $Y$ via elements of $\pi_2(Y)={\rm ker}\, d_2= {\rm ker} f$.  

Let

\vspace{-9mm}

\[\hat{X_i}=\hat{E_i}\,+\,\hat{D_i}(1-a_i),\quad\,\,i=1,\cdots,n \qquad{\rm and}\qquad \hat{X}_{n+1}=\substack{n\\ \sum\\{i=1}}\hat{ D_i}.\]  

\vspace{-2mm}

By Theorem \ref{main} we may pick $\alpha_1,\cdots,\alpha_{n-1} \in {\rm ker} f$ such that $\alpha_i$ differs from $\hat{D_i}$ by a linear combination of the $\hat{X_i}$.  Now construct $Y'$ from $Y$ by attaching a 3-cell $B_i$ with attaching map given by $\alpha_i$, for each $i=1,\cdots,n-1$.

\begin{lemma}\label{factor}
We have a basis for $C_2(\tilde{Y})$ given by $\alpha_1,\cdots,\alpha_{n-1}, \hat{X_1},\cdots,\hat{X}_{n+1}$.
\end{lemma}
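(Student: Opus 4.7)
The approach is to note that the proposed set has $2n$ elements, matching the free rank of $C_2(\tilde{Y})$, and to exhibit two successive invertible changes of basis. I would factor through the intermediate set $\hat{D_1},\ldots,\hat{D}_{n-1},\hat{X_1},\ldots,\hat{X}_{n+1}$, in which the transition matrices are visibly triangular.

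For the first step (passing from the free basis $\hat{D_1},\ldots,\hat{D_n},\hat{E_1},\ldots,\hat{E_n}$ to the intermediate set), the defining formulas rearrange to
\[\hat{E_i}=\hat{X_i}-\hat{D_i}(1-a_i),\qquad \hat{D_n}=\hat{X}_{n+1}-\substack{n-1\\ \sum \\ j=1}\hat{D_j},\]
so every element of the free basis lies in the span of the intermediate set, giving a generating set. For linear independence I would consider a vanishing right combination $\sum_{i=1}^{n-1}\hat{D_i}u_i+\sum_{j=1}^{n+1}\hat{X_j}v_j=0$, substitute the definitions of the $\hat{X_j}$ back into the free basis, and read off from freeness of $\{\hat{D_i},\hat{E_i}\}$: the $\hat{E_j}$-components force $v_j=0$ for $j\leq n$, then the $\hat{D_n}$-component forces $v_{n+1}=0$, and finally the remaining $\hat{D_i}$-components force $u_i=0$. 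This is purely triangular elimination.

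For the second step, since each $\alpha_i$ differs from $\hat{D_i}$ by a $\mathbb{Z}[G]$--linear combination of the $\hat{X_j}$, the change of basis matrix from the intermediate basis to $\alpha_1,\ldots,\alpha_{n-1},\hat{X_1},\ldots,\hat{X}_{n+1}$ is upper unipotent of the shape $\left(\begin{matrix} I_{n-1} & \ast \\ 0 & I_{n+1} \end{matrix}\right)$, hence invertible over $\mathbb{Z}[G]$. Composing with the first change of basis gives the desired result.

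The main obstacle is purely bookkeeping: since $C_2(\tilde{Y})$ is a right $\mathbb{Z}[G]$--module over a noncommutative ring, I would be careful that coefficients are multiplied on the correct side throughout the substitutions and that the $v_j$ are extracted in the right order (the forced vanishing must cascade $v_1,\ldots,v_n \Rightarrow v_{n+1} \Rightarrow u_i$). No new mathematical difficulty arises beyond Theorem \ref{main}, which was already invoked to produce the $\alpha_i$ in the kernel of $f$.
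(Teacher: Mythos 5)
Your proof is correct and follows essentially the same route as the paper: both pass through the intermediate set $\hat{D_1},\ldots,\hat{D}_{n-1},\hat{X_1},\ldots,\hat{X}_{n+1}$ and relate it to the free basis $\hat{D_1},\ldots,\hat{D_n},\hat{E_1},\ldots,\hat{E_n}$ by triangular (unipotent) changes of basis. The paper phrases this as a chain of elementary replacement operations on the generating set rather than as transition matrices and a separate independence check, but the content is identical.
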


\vspace{-4mm}
\begin{proof}
Starting with the set  $\alpha_1,\cdots,\alpha_{n-1}, \hat{X_1},\cdots,\hat{X}_{n+1}$ and replacing each $\alpha_j$ with itself minus  appropriate multiples of the $\hat{X_i}$, we are left with the set $\hat{D_1},\cdots,\hat{D}_{n-1}, \hat{X_1},\cdots,\hat{X}_{n+1}$.

Then replace $\hat{X}_{n+1}$ with itself minus the sum of the $\hat{D_1},\cdots, \hat{D}_{n-1}$, to get the set $\hat{D_1}\cdots,\hat{D_{n}}, \hat{X_1},\cdots,\hat{X}_{n}$. 

Finally for $i=1,\cdots,n$ replace each $\hat{X_i}$ with itself minus an appropriate multiple of $\hat{D_i}$, to obtain  $\hat{D_1}\cdots,\hat{D_{n}}, \hat{E_1},\cdots,\hat{E}_{n}$, which is a basis of $C_2(\tilde{Y})$ .

As the property of being a basis is invariant under the operations we performed, $\alpha_1,\cdots,\alpha_{n-1}, \hat{X_1},\cdots,\hat{X}_{n+1}$ is also a basis of $C_2(\tilde{Y})$.
\end{proof}

Thus we may write $C_2(\tilde{Y})\cong A\oplus X$ where $A$ is generated by the $\alpha_i$ and $X$ is generated by the $\hat{X_i}$.  Consider the boundary map \[d'_3\colon C_3(\tilde{Y'}) \to C_2(\tilde{Y'})\cong C_2(\tilde{Y})\cong A \oplus X\]
The generator in $C_3(\tilde{Y'})$ corresponding to $B_i$ maps to $\alpha_i \in A$.  Thus $d'_3$ identifies $C_3(\tilde{Y'})$ with the summand $A$ in $C_2(\tilde{Y'})$ and we know that $Y'$ is cohomologically 2 dimensional.  The Euler characteristic of $Y'$ is $1-2n+2n-(n-1)=2-n$.  We conclude:

 \begin{theorem}
 The finite 3-complex $Y'$ is cohomologically 2 dimensional, has fundamental group $G$ and Euler characteristic $2-n$.
  \end{theorem}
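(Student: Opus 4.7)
The plan is to verify the three assertions in turn; the first two are essentially bookkeeping, while cohomological $2$-dimensionality is the substantive point and uses Lemma \ref{factor} in a direct way. For the fundamental group, since $Y'$ is obtained from $Y$ by attaching cells of dimension three, the inclusion $Y \hookrightarrow Y'$ induces an isomorphism on $\pi_1$, so $\pi_1(Y') = \pi_1(Y) = G$.

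For the Euler characteristic, I would count cells directly. The Cayley complex $Y$ of the presentation (\ref{pres}) has one $0$-cell, $2n$ one-cells (the $a_i$ and $b_i$), and $2n$ two-cells (the $R_i$ and $S_i$), hence $\chi(Y) = 1 - 2n + 2n = 1$. Adjoining the $n-1$ three-cells $B_1,\ldots,B_{n-1}$ contributes $-(n-1)$, yielding $\chi(Y') = 2-n$.

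The main step is cohomological $2$-dimensionality. I would work with the cellular chain complex $C_*(\tilde{Y'})$ of the universal cover as a complex of free right $\mathbb{Z}[G]$-modules. Here $C_3(\tilde{Y'})$ is freely generated on lifts of the $B_i$, and by construction $d'_3$ sends each such generator to the corresponding $\alpha_i \in C_2(\tilde{Y'}) = C_2(\tilde{Y})$, where the latter identification comes from the fact that $Y'$ and $Y$ share the same $2$-skeleton. By Lemma \ref{factor}, $\alpha_1,\ldots,\alpha_{n-1}$ extend to a $\mathbb{Z}[G]$-basis of $C_2(\tilde{Y})$, so $d'_3$ is a split injection of free modules onto the summand $A$, with complementary free summand $X$ generated by the $\hat{X}_j$.

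Cohomological $2$-dimensionality now follows formally: for any local coefficient system $M$, applying $\mathrm{Hom}_{\mathbb{Z}[G]}(-,M)$ to the split injection $C_3(\tilde{Y'}) \hookrightarrow C_2(\tilde{Y'})$ yields a split surjective top cellular coboundary, so $H^3(Y'; M) = 0$, and $H^k(Y'; M) = 0$ for $k \geq 4$ by dimension. The only point to handle carefully is the identification of the chain-level class $\alpha_i$ with the boundary of the attaching $3$-cell $B_i$, which is exactly the content of the construction preceding Lemma \ref{factor}; granted that, no genuine difficulty remains.
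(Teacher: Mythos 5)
Your proposal is correct and follows essentially the same route as the paper: the paper likewise deduces from Lemma \ref{factor} that $d'_3$ identifies $C_3(\tilde{Y'})$ with the free summand $A$ of $C_2(\tilde{Y'})\cong A\oplus X$, and computes $\chi(Y')=1-2n+2n-(n-1)=2-n$. Your added detail on why a split injective $d'_3$ kills $H^3(Y';M)$ for every coefficient system (and $H_3(\tilde{Y'})$) is just an expansion of what the paper leaves implicit.
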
 

If $Y'$ were homotopy equivalent to a finite 2-complex $Z$, then on contracting $Z$ by a maximal tree in its 1-skeleton one would obtain a Cayley complex $Z'$.  The Euler characteristic of $Z'$ would be $2-n$ so the underlying presentation of $G$ would have $n-1$ more generators than relators.  However no known presentations of $G$ have fewer relators than generators.  Thus $Y'$ is a plausible candidate for a solution to Wall's D2 problem.

\vspace{-1mm}

{\bf Acknowledgments}
The author wishes to thank Jens Harlander for reading a preprint of this work and providing historical context.

\vspace{-4mm}

\end{document}